\def\blfootnote{\xdef\@thefnmark{}\@footnotetext}
\newcommand{\Tr}{{T}}
\newcommand{\tA}{\mat{\tilde{A}}}
\newcommand{\tb}{\vec{\tilde{b}}}
\newcommand{\tx}{\vec{\tilde{x}}}
\newcommand{\tlambda}{{\widetilde{\lambda}}}
\newcommand{\sC}{{\mathcal{C}}}
\newcommand{\sF}{{\mathcal{F}}}
\newcommand{\sI}{{\mathcal{I}}}
\newcommand{\sP}{{\mathcal{P}}}
\title{Erasure coding for fault oblivious linear system solvers}
\author{David F.~Gleich, Ananth Grama, and Yao Zhu
\\
Purdue University, Department of Computer Science}
\begin{document}

\date{}
\maketitle

\blfootnote{~}
\begin{abstract} 
Dealing with hardware and software faults is an important problem
as parallel and distributed systems scale to millions of processing cores
and wide area networks. Traditional methods for dealing with faults include
checkpoint-restart, active replicas, and deterministic replay. Each of these
techniques has associated resource overheads and constraints. In this paper, we
propose an alternate approach to dealing with faults, based on input augmentation.
This approach, which is an algorithmic analog of erasure coded storage, applies a
minimally modified algorithm on the augmented input to produce an augmented output.
The execution of such an algorithm proceeds completely oblivious to faults
in the system. In the event of one or more faults, the real solution is recovered
using a rapid reconstruction method from the augmented output. We demonstrate this
approach on the problem of solving sparse linear systems using a conjugate gradient
solver. We present input augmentation and output recovery techniques. Through detailed
experiments, we show that our approach can be made oblivious to a large number of
faults with low computational overhead. Specifically, we demonstrate cases where
a single fault can be corrected with less than 10\% overhead in time, and 
even in extreme cases (fault rates of 20\%), our approach is able to compute a solution
with reasonable overhead. These results represent a significant improvement over
the state of the art.
\end{abstract}

\section{Introduction}

The next generation of parallel and distributed systems are projected to scale to
millions of processing cores and beyond. Distributed systems already span wide
geographic areas. In these regimes, hardware and software faults present major
challenges for scalable execution of programs. In this paper, we consider the
solution of an $n\times n$ nonsingular linear system
\begin{align}
\mA \vx&= \vb
\label{eq:raw-linear-system}
\end{align}
in an environment with faults.

Existing fault tolerant computations can be broadly classified as algorithmic or
system-supported. Algorithmic fault tolerance mechanisms alter the algorithm to
render it robust to faults. This requires deep algorithmic and analytical insight to
quantify fault tolerance properties and associated overheads. It can also
change the algorithmic cost and complexity significantly. System supported fault tolerance 
schemes include checkpoint-restart, active replicas, and deterministic replay.
Checkpoint-restart schemes involve the overheads of consistent checkpointing and I/O,
particularly for ultra-scale platforms, where I/O capacity and bandwidth are both at
a premium relative to the compute capability. Furthermore, the asynchronous nature of many
highly scalable algorithms makes it costly to identify consistent checkpoints and to
perform associated rollbacks. Variants of these schemes include in-memory checkpointing,
use of persistent storage (flash memory), and application-specified checkpoints.

Active replicas, commonly used in mission-critical real-time applications,
execute multiple replicas of each task. Failures are detected and replicas are
managed to support real-time constraints. While the runtime characteristics of
these schemes can be controlled (e.g., through worst-case runtime estimation), resource
overheads of such schemes are high, since tolerating a $k$-process failure
among $p$ processes requires $(k+1)p$ active processes. This cost is significant --
as an example, tolerating 10 faults in an ensemble of 1000 cores (a 1\% error rate)
requires 11,000 processes! Yet other systems such as MapReduce use the concept of
deterministic replay. In this model, computation proceeds in steps with checkpoints
at the end of each step. Processes are monitored within the steps, and in the event
of a failure, the computation associated with the failed process is replayed at an
alternate node. While this model has been successfully applied to a large set of
wide-area distributed applications, it has the drawbacks of staged execution and
increased job makespan, particularly when the number of faults is large. Furthermore,
checkpointing to persistent storage (typically a distributed file system) can add
significant overhead, particularly, when steps are small.

Our contribution in this paper is the design of a new type of method for
solving \eqref{eq:raw-linear-system} that we call a fault oblivious algorithm.
The essence of the idea is that we develop an algorithm that allows us to recover
the correct solution to the problem even if some of the computational units involved
in the solution process fail during the execution of the algorithm.  More specifically,
given system \eqref{eq:raw-linear-system}, we design an augmented system:
\begin{align}
\tA\tx&=\tb
\label{eq:encode-linear-system}
\end{align}
together with a solution strategy such that using the solution strategy to solve
\eqref{eq:encode-linear-system} in an environment with faults would have the following
properties:
\begin{enumerate}[1.]
\item \textit{Deterministic finite termination}. The solution process terminates in
finite steps. When it terminates, it indicates one and only one of the two cases:
(i) it fails to solve \eqref{eq:encode-linear-system} to a specified precision;
(ii) an approximate solution $\tx$ to \eqref{eq:encode-linear-system} within the
specified precision has been found.
\item \textit{Recoverability of the intended solution}. In case (ii) above, we
are able to recover the intended solution $\vx$ to \eqref{eq:raw-linear-system}
from $\tx$ through an inexpensive computation.
\end{enumerate}
As discussed above, a variety of existing fault tolerant techniques such as
check-pointing and replication give us finite termination and trivially satisfy
recoverability. Thus, our goal in designing the augmented system is that the
computational complexity of a solution process with deterministic finite termination
is bounded by the computational complexity of available fault-tolerant techniques.
In other words -- we want augmented systems that enable us to solve a given problem in
an environment with faults more quickly than existing techniques.

Our design for the augmented system \eqref{eq:encode-linear-system} is a linear
coding of the input system \eqref{eq:raw-linear-system}. For this reason, we refer
to \eqref{eq:encode-linear-system} as the \textit{encoded system}, and the input
system \eqref{eq:raw-linear-system} the \textit{raw system}. There are three major
components of our approach that we discuss in the next three sections. 
\begin{itemize}
\item The fault model -- Section~\ref{sec:fault}. We need to define the types of the
faults and their semantics that our coded linear solver can handle.
\item The encoding scheme -- Section~\ref{sec:encoding}. We add a set of rows
and columns to the matrix to render the new linear system singular, but consistent.
It remains this way for up to $k$ component-wise faults in the solution vector $\tx$.
\item The solution process and recovery scheme --
Section~\ref{section:solution-process-and-recovery-scheme}. Our solution process
is to run a conjugate gradient algorithm. We show that when this algorithm terminates,
it does so at a consistent solution of the encoded system. We then describe how to
recover the true solution $\vx$ in light of the encoding.
\end{itemize}
Because of the close relationship between the encoding scheme, solution process, and
recovery scheme, we adopt a co-design approach for these three tasks. 

For simplicity, in this manuscript, we restrict ourselves to the case where $\mA$
is symmetric positive definite (SPD). We report experiments on the efficiency of
our encoding in Section~\ref{sec:experiments}. Our main findings are that the
encoded system takes minimal additional work to solve in the presence of a fault.
In the presence of a substantial number of faults (20\% of components failing), it
takes $5$ times the number of iterations of a linear solver. However, note that each
iteration of this solver requires no additional overhead to achieve fault tolerance.
Thus, this is a substantial savings compared with a checkpoint-restart system. 

\section{Fault model} \label{sec:fault}

We view the execution of the algorithm in two stages -- the setup phase and the execution
phase. The setup phase consists of the input augmentation step of the algorithm. During
this phase, we assume that a small amount of reliable work can be done. In the presence of
faults, this can be achieved using more expensive fault tolerance techniques such as
replicated execution or deterministic replay. Since this step is a very small fraction of
the overall computation (less than 1\% for typical systems), the overhead is not 
significant. Please note that in current systems, the entire execution
is performed in this reliable model. Thus, we may assume that we have
this ability, although we wish to limit our use of these expensive schemes to minimize
performance overhead.

The execution phase of the algorithm corresponds to the solve over the augmented system.
In this phase, we assume an ensemble of message passing processes executing the solver.
During this phase of execution, we assume fail-stop failures; i.e., in the event of a
fault, a process halts. No further messages are received from this process by
any of the other processes. Indeed there are other fault models as well, ranging from
transient (soft) faults to Byzantine behavior. Soft faults manifest themselves
in the form of erroneous data. This data, when incorporated into data at other
processes, can lead to cascading error in programs.

Our proposed method can be extended to these other fault classes using existing
fault detection schemes. In such schemes, messages are signed with a checksum,
allowing us to detect on-the-wire errors. Asserts in the program, corresponding to
predicates whose violation signifies an error can be used to detect soft errors in
processes. If either of these soft efforts are detected, the receiving process
simply drops messages, thus emulating a fail-stop error. When a soft error is detected
at a process, the process is killed, once again resulting in a fail stop failure. Asserts
work similarly when Byzantine failures are detected. Thus, a combination of tolerance
to fail-stop failures with fault detection techniques allows us to deal with
a broad set of faults.

Please note that there are other system software related issues
associated with the proposed fault oblivious paradigm. Specifically, in many APIs a
single process failure can cause the entire program to crash. In yet
other scenarios, a crashed process can cause group communication operations
(reductions, broadcasts, etc.) to block. These kinds of program behavior would not allow
leveraging of our proposed schemes. We assume program behavior
in which faulty processes simply drop out of the ensemble, while the rest continue.
This paper does not address the design of such a fault oblivious API. Rather
it algorithmically establishes the feasibility and superior performance of
the erasure coded computation scheme for linear solvers.

\section{Encoding scheme} \label{sec:encoding}

Let $\vx^\ast$ be the true solution of the linear system $ \mA \vx = \vb$.
Let $k\leq n$ be the number of allowed faults during its execution.
Let $\mE\in\mathbb{R}^{n\times k}$ be an encoding matrix that we'll
specify completely shortly. We design the augmented matrix
$\tA\in\mathbb{R}^{(n+k)\times (n+k)}$
as follows:
\begin{align}
\tA &= \left[\begin{array}{ll}
\mA & \mA \mE\\
\mE^{\Tr} \mA & \mE^{\Tr} \mA \mE
\end{array}\right].
\label{eq:encode-matrix}
\end{align}
We choose the encoding of $\vx^\ast$ to be an embedding into $\mathbb{R}^{n+k}$, i.e.,
\begin{align}
\tx^{\ast}&=\left[\begin{array}{l}
\vx^{\ast}\\
0
\end{array}
\right]
\label{eq:encode-solution}
\end{align}
Accordingly, the encoding of $\vb$ is given by
\begin{align}
\tb &= \tA\tx^{\ast} = \left[\begin{array}{l}
\vb\\
\mE^{\Tr} \vb
\end{array}\right]
\end{align}

\subsection{Basic properties}

We now establish a few properties of these systems in terms of their rank,
a characterization of the solutions, and the semi-definiteness of $\tA$.

\begin{proposition} A null space basis of $\tA$ is $\left[\begin{array}{l}\mE\\ -\mI_{k}\end{array}\right]$.
\label{prop:null-space-basis}
\end{proposition}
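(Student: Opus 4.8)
The plan is to verify that the given $(n+k) \times k$ matrix lies in the null space of $\tA$ and then argue that it spans the entire null space. Writing $\mN = \left[\begin{array}{l}\mE\\ -\mI_{k}\end{array}\right]$, the first step is a direct block multiplication. Using the block form \eqref{eq:encode-matrix}, I compute
\begin{align}
\tA \mN &= \left[\begin{array}{ll}
\mA & \mA \mE\\
\mE^{\Tr} \mA & \mE^{\Tr} \mA \mE
\end{array}\right]
\left[\begin{array}{l}\mE\\ -\mI_{k}\end{array}\right]
= \left[\begin{array}{l}
\mA\mE - \mA\mE\\
\mE^{\Tr}\mA\mE - \mE^{\Tr}\mA\mE
\end{array}\right]
= \vec{0}.
\end{align}
This shows that every column of $\mN$ is a null vector, so $\operatorname{range}(\mN) \subseteq \operatorname{null}(\tA)$. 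Since $\mN$ has the $-\mI_k$ block in its lower portion, its $k$ columns are manifestly linearly independent, giving $\dim \operatorname{null}(\tA) \geq k$.

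The second step is the reverse inclusion, which is the crux of the argument. The key structural observation is that $\tA$ can be factored through the raw matrix. Specifically, if I set $\mU = \left[\begin{array}{ll}\mI_n & \mE\end{array}\right] \in \mathbb{R}^{n \times (n+k)}$, then $\tA = \mU^{\Tr} \mA \mU$. One verifies this by the block product: $\mU^{\Tr}\mA\mU$ produces exactly the four blocks $\mA$, $\mA\mE$, $\mE^{\Tr}\mA$, and $\mE^{\Tr}\mA\mE$ of \eqref{eq:encode-matrix}. Because $\mA$ is nonsingular (indeed SPD), the null space of $\mU^{\Tr}\mA\mU$ coincides with the null space of $\mU$ itself: if $\tA\vec{z} = \mU^{\Tr}\mA\mU\vec{z} = \vec{0}$, then $(\mU\vec{z})^{\Tr}\mA(\mU\vec{z}) = \vec{z}^{\Tr}\tA\vec{z} = 0$, and positive definiteness of $\mA$ forces $\mU\vec{z} = \vec{0}$; conversely $\mU\vec{z}=\vec{0}$ trivially gives $\tA\vec{z}=\vec{0}$. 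Thus $\operatorname{null}(\tA) = \operatorname{null}(\mU)$.

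It therefore remains to identify $\operatorname{null}(\mU)$. Writing $\vec{z} = \left[\begin{array}{l}\vec{z}_1\\ \vec{z}_2\end{array}\right]$ with $\vec{z}_1 \in \mathbb{R}^n$ and $\vec{z}_2 \in \mathbb{R}^k$, the equation $\mU\vec{z} = \vec{z}_1 + \mE\vec{z}_2 = \vec{0}$ pins down $\vec{z}_1 = -\mE\vec{z}_2$ with $\vec{z}_2$ free. Hence $\operatorname{null}(\mU) = \left\{\left[\begin{array}{l}-\mE\vec{z}_2\\ \vec{z}_2\end{array}\right] : \vec{z}_2 \in \mathbb{R}^k\right\} = \operatorname{range}\!\left(\left[\begin{array}{l}-\mE\\ \mI_k\end{array}\right]\right) = \operatorname{range}(\mN)$, where the last equality holds since negating the spanning matrix does not change its column space. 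This establishes $\operatorname{null}(\tA) \subseteq \operatorname{range}(\mN)$ and completes the equality.

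The main obstacle, and the only place where care is genuinely needed, is the reverse inclusion: showing that the null space is no larger than the obvious $k$-dimensional subspace. The factorization $\tA = \mU^{\Tr}\mA\mU$ together with the positive definiteness of $\mA$ is what makes this clean — it reduces the problem to the elementary null space of $\mU$. If one instead tried a brute-force rank computation on the block matrix directly, the argument would be more cumbersome; recognizing the congruence structure is the insight that organizes the proof.
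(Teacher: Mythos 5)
Your proof is correct, but it settles the crux by a different route than the paper. The paper's proof is a dimension count: it asserts $\rank(\tA)=n$ (justified only by ``the design of $\tA$ and $\mA$ being SPD''), concludes the null space has dimension $k$, and then---exactly as in your first step---checks by inspection that $\tA\left[\begin{array}{l}\mE\\ -\mI_{k}\end{array}\right]=0$ and that these $k$ columns are linearly independent. You avoid the dimension count entirely: by making the congruence $\tA = \mat{U}^{\Tr}\mA\,\mat{U}$ with $\mat{U}=\left[\begin{array}{ll}\mI_n & \mE\end{array}\right]$ explicit and invoking positive definiteness, you get $\operatorname{null}(\tA)=\operatorname{null}(\mat{U})$ and then solve $\mat{U}\vec{z}=0$ outright. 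The trade-off is instructive: the paper's argument is shorter but leaves its rank claim unproved at that point in the text (the underlying factorization only surfaces later, in Cholesky form, in the proof of Proposition~\ref{prop:encode-matrix-SPD}), whereas your argument supplies exactly that missing justification---$\operatorname{null}(\tA)=\operatorname{null}(\mat{U})$ immediately yields $\rank(\tA)=\rank(\mat{U})=n$---at the cost of slightly more machinery. Both proofs ultimately rest on the same structural fact about $\tA$; yours makes it explicit rather than asserting its consequence.
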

\begin{proof}
From the design of $\tA$ in \eqref{eq:encode-matrix} and $\mA$ being SPD, we have $\rank(\tA)=n$. Thus the null space has dimension $k$. Then, by inspection, $\tA \sbmat{\mE \\ -\mI_k} = 0$ and $\sbmat{\mE \\ -\mI_k}$ has column rank $k$.
\end{proof}

As a corollary of the above proposition, we have the following proposition regarding the non-ambiguity of the solution encoding \eqref{eq:encode-solution}.
\begin{proposition}
Let $\sbmat{ \vy \\ \vz}$ be any solution of \eqref{eq:encode-linear-system} where $\vy \in \RR^{n}$. Once $\vz \in \RR^{k}$ is specified, then the components of $\vy$ are uniquely determined. Moreover, if $\vz = 0$, then $\vy = \vx^\ast$.
\label{prop:non-ambiguity}
\end{proposition}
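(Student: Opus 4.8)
The plan is to exploit the fact that the encoded system \eqref{eq:encode-linear-system} already comes equipped with a known particular solution. By construction $\tb = \tA\tx^\ast$ with $\tx^\ast = \sbmat{\vx^\ast \\ 0}$, so the system is consistent, and its complete solution set is exactly the affine translate of the null space of $\tA$ by $\tx^\ast$. Every solution $\sbmat{\vy \\ \vz}$ therefore has the form $\tx^\ast + \vec{n}$ for some $\vec{n}$ lying in the null space of $\tA$.

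Next I would bring in Proposition~\ref{prop:null-space-basis}, which supplies the explicit basis $\sbmat{\mE \\ -\mI_k}$ for this null space. Because its columns have full column rank $k$, every null-space vector is represented uniquely as $\sbmat{\mE \\ -\mI_k}\vec{t}$ for some $\vec{t} \in \RR^k$. Substituting this into the general solution and reading block-by-block gives
\begin{align*}
\sbmat{\vy \\ \vz} = \sbmat{\vx^\ast \\ 0} + \sbmat{\mE \\ -\mI_k}\vec{t} = \sbmat{\vx^\ast + \mE\vec{t} \\ -\vec{t}}.
\end{align*}
The bottom block forces $\vec{t} = -\vz$, so $\vec{t}$ is completely pinned down once $\vz$ is specified; substituting back into the top block yields $\vy = \vx^\ast - \mE\vz$. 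This exhibits $\vy$ as a fixed function of $\vz$, establishing the first claim, and setting $\vz = 0$ immediately gives $\vy = \vx^\ast$, establishing the second.

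There is little in the way of a genuine obstacle here, since the statement is essentially a corollary of Proposition~\ref{prop:null-space-basis}. The only points requiring care are confirming consistency of \eqref{eq:encode-linear-system} --- which is immediate from $\tb = \tA\tx^\ast$ --- and invoking uniqueness of the null-space coordinates $\vec{t}$, which rests on the full column rank of $\sbmat{\mE \\ -\mI_k}$ already recorded in Proposition~\ref{prop:null-space-basis}. If anything, the subtlety is purely expository: making explicit that \emph{specifying} $\vz$ is the same as fixing the null-space coefficient, after which no freedom in $\vy$ remains.
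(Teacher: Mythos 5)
Your proof is correct and follows essentially the same route as the paper's: both write an arbitrary solution as $\sbmat{\vx^\ast \\ 0}$ plus a unique combination $\sbmat{\mE \\ -\mI_k}\va$ of the null-space basis from Proposition~\ref{prop:null-space-basis}, read off $\va = -\vz$ from the bottom block, and conclude $\vy = \vx^\ast - \mE\vz$. Your version merely makes explicit the consistency of the system and the role of the full column rank, which the paper leaves implicit.
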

\begin{proof}
Note that $\sbmat{\vx^\ast \\ 0}$ is a solution to \eqref{eq:encode-linear-system}. Thus, any solution to \eqref{eq:encode-linear-system} can be written as: 
\[ \bmat{ \vy \\ \vz } = \bmat{ \vx^\ast \\ 0 } + \bmat{ \mE \\ -\mI_k} \va \]
for a unique $\va \in \RR^{k}$. Due to the non-zero structure, we have $\va = -\vz$. Hence, $\vy$ is uniquely determined as $\vx^\ast - \mE \vz$. The final statement follows from $\vz = 0$.
\end{proof}

We now prove that $\tA$ as given in \eqref{eq:encode-matrix} is symmetric positive semidefinite (SPSD).
\begin{proposition}
If $\mA$ is symmetric positive definite, then $\tA$ as defined in \eqref{eq:encode-matrix} is SPSD.
\label{prop:encode-matrix-SPD}
\end{proposition}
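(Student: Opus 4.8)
The plan is to exhibit $\tA$ as a \emph{congruence transform} of $\mA$, after which both symmetry and positive semidefiniteness follow immediately from the corresponding properties of $\mA$. The block structure in \eqref{eq:encode-matrix} is suggestive: the off-diagonal blocks $\mA\mE$ and $\mE^\Tr\mA$ and the trailing block $\mE^\Tr\mA\mE$ all look like what one obtains by sandwiching $\mA$ between a rectangular matrix and its transpose. Concretely, I would set $\mat{B} = \sbmat{\mI_n & \mE} \in \RR^{n\times(n+k)}$, the horizontal concatenation of the identity and the encoding matrix, and claim that $\tA = \mat{B}^\Tr \mA \mat{B}$.

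First I would verify this factorization by a direct block multiplication: $\mat{B}^\Tr \mA \mat{B} = \sbmat{\mI_n \\ \mE^\Tr}\, \mA \,\sbmat{\mI_n & \mE}$. Computing $\mA\,\sbmat{\mI_n & \mE} = \sbmat{\mA & \mA\mE}$ and then left-multiplying by $\sbmat{\mI_n \\ \mE^\Tr}$ reproduces exactly the four blocks of \eqref{eq:encode-matrix}. This step is routine but is the real content of the argument. Once the identity is in hand, symmetry is one line: since $\mA^\Tr = \mA$, we get $\tA^\Tr = (\mat{B}^\Tr \mA \mat{B})^\Tr = \mat{B}^\Tr \mA^\Tr \mat{B} = \tA$. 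For semidefiniteness, take an arbitrary $\vec{w} \in \RR^{n+k}$ and write $\vec{w}^\Tr \tA \vec{w} = (\mat{B}\vec{w})^\Tr \mA (\mat{B}\vec{w}) \ge 0$, where the inequality uses that $\mA$ is SPD (hence PSD) applied to the vector $\mat{B}\vec{w} \in \RR^n$. Therefore $\tA$ is SPSD.

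I expect no serious obstacle here: the only genuine insight is recognizing the congruence factorization $\tA = \mat{B}^\Tr \mA \mat{B}$, and everything afterward is a direct application of the definition of positive semidefiniteness. As a sanity check that supports the factorization, note that $\mat{B}$ has full row rank $n$ and $\mA$ is invertible, so $\rank(\tA) = \rank(\mat{B}) = n$; moreover, since $\mA$ is positive definite, $\vec{w}^\Tr \tA \vec{w} = 0$ forces $\mat{B}\vec{w} = 0$, so the null space of $\tA$ coincides with that of $\mat{B}$, namely the span of $\sbmat{\mE \\ -\mI_k}$. This is in exact agreement with Proposition~\ref{prop:null-space-basis}, giving added confidence that the factorization is correct before grinding through the block arithmetic.
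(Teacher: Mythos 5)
Your proof is correct, but it takes a slightly different route from the paper. The paper proves the proposition by exhibiting $\tA$ as a Gram matrix via the Cholesky factorization $\mA = \mL\mL^\Tr$, writing
$\tA = \sbmat{ \mL \\ \mE^\Tr \mA \mL^{-T} } \sbmat{ \mL \\ \mE^\Tr \mA \mL^{-T} }^\Tr$,
whereas you avoid factorizing $\mA$ altogether and instead observe the congruence $\tA = \mat{B}^\Tr \mA \mat{B}$ with $\mat{B} = \sbmat{\mI_n & \mE}$, then apply the definition of positive semidefiniteness directly. The two are in fact algebraically linked: since $\mL^{-1}\mA\mE = \mL^\Tr \mE$, the paper's factor is exactly $(\mL^\Tr \mat{B})^\Tr$, so the paper's identity is your congruence composed with Cholesky. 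Your version buys two things: it is more elementary (no appeal to the existence of a Cholesky factor or to $\mL^{-T}$), and it is strictly more general, since $\vec{w}^\Tr \tA \vec{w} = (\mat{B}\vec{w})^\Tr \mA (\mat{B}\vec{w}) \ge 0$ only needs $\mA$ to be SPSD, not SPD. The paper's version buys an explicit full-rank square-root factor of size $(n+k)\times n$, which makes $\rank(\tA) = n$ immediate (a fact Proposition~\ref{prop:null-space-basis} relies on); your closing remark recovers the same rank and null-space information from the congruence, using the positive definiteness of $\mA$ to conclude $\vec{w}^\Tr \tA \vec{w} = 0$ forces $\mat{B}\vec{w} = 0$, which is a nice consistency check and is also correct.
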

\begin{proof}
Let the Cholesky factorization of $\mA$ be $\mA = \mL \mL^T$. Then, by inspection, we have 
\[ \tA = \bmat{ \mL \\ \mE^\Tr \mA \mL^{-T} } \bmat{ \mL \\ \mE^\Tr \mA \mL^{-T} }^T. \]
\end{proof}

\subsection{Solution degeneracies and faults}

The matrix $\tA$ has rank $n$, despite having $n + k$ rows and columns.
We now show how a specific use of this degeneracy allows us to have fault tolerant
solutions to \eqref{eq:encode-linear-system}. Let $\tx\in\mathbb{R}^{(n+k)}$ be the
encoded solution. For the same of clarity in presentation, we assume that faults can
\textit{only} occur within the components of $\tx_{1:n}$, i.e., the redundant
components $\tx_{(n+1):(n+k)}$ introduced by the encoding cannot be faulty.
Please note that this is not a limitation of our scheme.
We let the set of faulty indices be $\sF \subset [n]$. We constrain the
cardinality $|\sF| \leq k$. Let $\sC$ be the set of non-faulty (or correct)
components. Without loss of generality, we consider the system
\eqref{eq:encode-linear-system} in three components corresponding to the correct,
faulty, and redundant components of the solution. This is equivalent to a permutation,
after which we have that the solution is:
\begin{equation} \label{eq:permutation}
 \tx = \left[ \begin{array}{c} \vc \\ \vf \\ \vr \end{array} \right] \begin{array}{l} \text{correct} = \tx_{\sC} \\ \text{faulty} = \tx_{\sF} \\ \text{redundant}.  \end{array} 
\end{equation}
The overall permuted system is: 
\[ \bmat{ \mA_{11} & \mA_{12} & \mZ_1 \\
          \mA_{12}^T & \mA_{22} & \mZ_2 \\
					\mZ_1^T & \mZ_2^T & \mR } 
	 \bmat{ \vc \\ \vf \\ \vr }
	= 
	\bmat{ \vb_1 \\ \vb_2 \\ \mE^T \vb } 
	\text{ where } 
		\begin{cases} 
			\mZ_1 & = \mA_{11} \mE_1 + \mA_{12} \mE_2 \\
			\mZ_2 & = \mA_{12}^T \mE_1 + \mA_{22} \mE_2 \\
			\mR & = \mE^T \mA \mE 
		\end{cases}
\]
As our solver progresses, the components in $\vf$ become ``stuck'' at some
intermediate values as faults occur. We describe the semantics of these faults
more formally in the next section (Section~\ref{section:solution-process-and-recovery-scheme}).
The goal of this section is to show that we can recover solutions even when setting
$\vf$ to some arbitrary value.

Thus, for our erasure coded solvers, we need a condition on the matrix $\mE$ such that: 
\begin{compactenum}
\item There is always a solution to \eqref{eq:encode-linear-system} for any $\vf$ as long
as $|\sF| \le k$ (Proposition~\ref{prop:sufficient-condition-existence}).
\item Given any solution computed with faulty components ($|\sF| \le k$), we can extract
a solution to \eqref{eq:encode-linear-system}
(Proposition~\ref{prop:equivalence-encode-system-subsystem}).
\end{compactenum} 
The condition on the matrix $\mE$ that is essential to these results is the Kruskal rank.
Recall the definition: 
\begin{definition}[Kruskal rank~\cite{kruskal1977-three-way}]
The Kruskal rank, or $k$-rank, of a matrix is the largest number $k$ such that every
subset of $k$ columns is linearly independent.
\end{definition}
Notice that the condition for a matrix to be of Kruskal rank $k$ is much stronger than
being rank $k$. The following proofs require that the Kruskal rank of $\mE^T$ is $k$
in order to handle up to $k$ faults. Some intuition for this requirement is that
we need the matrix $\mE$ to encode redundancies to any possible faults with a number
up to $k$. Recovering the solution will require us to invert a matrix for the
components where the solution was faulty, and hence, we need all possible subsets
of $k$ rows of $\mE$ to be invertible -- giving us the Kruskal rank condition.
We now present these two results formally:

\begin{proposition} 
Let $\mE^T \in \RR^{k \times n}$ have Kruskal rank $k$ and let $\sF$ be an
arbitrary subset of $[n]$ with $|\sF| \le k$. Then there exists a solution to
\eqref{eq:encode-linear-system} with $\tx_{\sF} = \vf$ for any $\vf$.
When $|\sF|=k$, such a solution is unique.
\label{prop:sufficient-condition-existence}
\end{proposition}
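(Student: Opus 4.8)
The plan is to reduce the existence of a solution with prescribed faulty values to the solvability of a small linear system in the null-space coefficients, and then to read off both existence and uniqueness from the Kruskal-rank hypothesis. First I would invoke Proposition~\ref{prop:non-ambiguity}: every solution of \eqref{eq:encode-linear-system} has the form
\[ \tx = \bmat{ \vx^\ast + \mE \va \\ -\va } \]
for a unique $\va \in \RR^k$, since it is obtained by adding $\va$ times the null-space basis $\sbmat{ \mE \\ -\mI_k }$ to the particular solution $\sbmat{ \vx^\ast \\ 0 }$. Because $\sF \subset [n]$, the constraint $\tx_\sF = \vf$ involves only the top block, so it reads $\vx^\ast_\sF + \mE_\sF \va = \vf$, where $\mE_\sF$ denotes the $|\sF| \times k$ submatrix of $\mE$ formed by the rows indexed by $\sF$. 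Thus producing a solution with the required faulty values is equivalent to solving
\[ \mE_\sF \va = \vf - \vx^\ast_\sF. \]

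Next I would establish existence from the consistency of this system. The columns of $\mE^\Tr$ are exactly the rows of $\mE$, so the hypothesis that $\mE^\Tr$ has Kruskal rank $k$ means that every set of $k$ rows of $\mE$ is linearly independent. As linear independence is hereditary, any $|\sF| \le k$ of these rows remain independent, so $\mE_\sF$ has full row rank $|\sF|$ and therefore admits a right inverse. Hence $\mE_\sF \va = \vf - \vx^\ast_\sF$ is consistent for every right-hand side; selecting any such $\va$ and forming $\tx$ as above yields a solution with $\tx_\sF = \vf$.

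For the uniqueness claim, I would specialize to $|\sF| = k$. Then $\mE_\sF$ is a $k \times k$ matrix whose rows are $k$ linearly independent vectors in $\RR^k$, hence invertible, so $\va = \mE_\sF^{-1}(\vf - \vx^\ast_\sF)$ is the only admissible coefficient vector. Since Proposition~\ref{prop:non-ambiguity} gives a bijection between solutions $\tx$ and coefficient vectors $\va$, the solution $\tx$ is unique.

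I expect the only delicate point to be the correct translation of the Kruskal-rank condition on $\mE^\Tr$ into full row rank of the row-submatrix $\mE_\sF$, together with the hereditary argument that reduces the under-determined case $|\sF| < k$ to the defining case $|\sF| = k$. Once that identification is in place, the remaining steps are routine linear algebra.
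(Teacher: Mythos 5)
Your proposal is correct and follows essentially the same route as the paper: parametrize all solutions via the null-space basis $\sbmat{\mE \\ -\mI_k}$, reduce the constraint $\tx_{\sF} = \vf$ to the small linear system $\mE_{\sF}\va = \vf - \vx^\ast_{\sF}$ (the paper's $\mE_2$), and apply the Kruskal-rank condition to get consistency for $|\sF| \le k$ and invertibility for $|\sF| = k$. Your only additions are cosmetic refinements the paper leaves implicit, namely spelling out the hereditary step from $k$ independent columns to any $|\sF| \le k$ of them and noting that the bijection with $\va$ transfers uniqueness to $\tx$.
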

\begin{proof}
Note that any solution of \eqref{eq:encode-linear-system} has the form: 
\[ \bmat{ \vx^* \\ 0 } + \bmat{ \mE \\ -\mI } \va \]
for some $\va \in \RR^{k}$. Let us permute this solution as in \eqref{eq:permutation}: 
\[ \bmat{ \vc \\ \vf \\ \vr } = \bmat{ \vx_1^* \\ \vx_2^* \\ 0 } + \bmat{ \mE_1 \\ \mE_2 \\ -\mI} \va. \]
It suffices to show that there exists $\va$ such that $\vf = \vx_2^* + \mE_2 \va$.
Because the rows of $\mE_2$ correspond to the faulty components, this is a set
of $|\sF|$ columns from $\mE^T$. These columns are linearly independent by the
Kruskal rank condition. Thus, there exists a solution to this underdetermined
linear system. If $|\sF| = k$, then the system is square and non-singular, so
the vector $\va$ is unique. 
\end{proof}

According to our fault model, as faults occur during an iterative process,
the components of $\vf$ become stuck (i.e., they are not updated because of lost messages).
Thus, the actual system that we solve is what we call a purified system consisting
of only non-faulty components:
\begin{equation} \label{eq:subsystem-purified}
\bmat{ \mA_{11} & \mZ_1 \\
       \mZ_1^T & \mR }
\bmat{ \vc \\ \vr }
= \bmat{\vb_1 \\ \mE^T \vb } - \bmat{ \mA_{12} \\ \mZ_2^T } \vf. 
\end{equation}			

By Proposition \ref{prop:sufficient-condition-existence}, if the encoding
matrix $\mE^T$ has Kruskal rank $k$, there exists a solution to the purified
subsystem \eqref{eq:subsystem-purified} from a solution to \eqref{eq:encode-linear-system}
with $\vx_{\sF} =\vf$ fixed. We now ask the reverse question.
Suppose $\sbmat{ \vc \\ \vr}$ is any solution to \eqref{eq:subsystem-purified},
will $\sbmat{ \vc\\ \vf \\ \vr}$ be a solution to \eqref{eq:encode-linear-system}
(under the permutation)?  The following proposition shows the answer is yes.
The reason we need this proof is that there are many possible solutions to
the purified subsystem. We need to establish that all solutions to
\eqref{eq:subsystem-purified} with $\vf$ fixed will lend us a full solution
to \eqref{eq:encode-linear-system}. 

\begin{proposition} \label{prop:equivalence-encode-system-subsystem}
Let $\mE^T \in \RR^{k \times n}$ have Kruskal rank $k$. Let $\sbmat{ \vc \\ \vr}$
be any solution to the purified system \eqref{eq:subsystem-purified},
where $|\sF| \le k$. Then $\sbmat{ \vc\\ \vf \\ \vr}$ is a solution to
\eqref{eq:encode-linear-system}.
\end{proposition}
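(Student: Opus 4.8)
The plan is to reduce the claim to a single missing scalar-block equation and then show that equation holds automatically, by combining the symmetric degeneracy of $\tA$ with the Kruskal rank hypothesis. Writing out the full permuted system of \eqref{eq:permutation}, its three block rows are
\begin{align*}
\mA_{11}\vc + \mA_{12}\vf + \mZ_1\vr &= \vb_1, \\
\mA_{12}^T\vc + \mA_{22}\vf + \mZ_2\vr &= \vb_2, \\
\mZ_1^T\vc + \mZ_2^T\vf + \mR\vr &= \mE^T\vb.
\end{align*}
Comparing with the purified system \eqref{eq:subsystem-purified}, the first and third block rows are exactly the two equations of \eqref{eq:subsystem-purified} after moving the $\vf$ terms back to the left. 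Hence any solution $\sbmat{\vc\\\vr}$ of the purified system already satisfies block rows one and three of the full system, and the whole proof reduces to verifying the middle equation, i.e.\ that the residual $s_2 := \mA_{12}^T\vc + \mA_{22}\vf + \mZ_2\vr - \vb_2$ vanishes.

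First I would record that the residual of the full permuted system therefore has the form $\vec{s} = \sbmat{0 \\ s_2 \\ 0}$, with zeros in the correct and redundant blocks. Next I would use symmetry of $\tA$: by Proposition~\ref{prop:null-space-basis} the null space is spanned by $\sbmat{\mE \\ -\mI}$, and since $\tA$ is symmetric, the transpose of this vector spans the left null space, which under the permutation \eqref{eq:permutation} becomes the row $\bmat{\mE_1^T & \mE_2^T & -\mI}$ and annihilates the permuted system matrix on the left.

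The key computation is to apply this left null vector to the residual $\vec{s}$. Because it kills the system matrix, $\bmat{\mE_1^T & \mE_2^T & -\mI}\vec{s}$ equals minus the same row applied to the permuted right-hand side $\sbmat{\vb_1 \\ \vb_2 \\ \mE^T\vb}$, namely $-(\mE_1^T\vb_1 + \mE_2^T\vb_2 - \mE^T\vb)$. Since the permutation splits $\mE^T\vb = \mE_1^T\vb_1 + \mE_2^T\vb_2$, this quantity is exactly zero, so $\bmat{\mE_1^T & \mE_2^T & -\mI}\vec{s} = \mE_2^T s_2 = 0$. To finish, I would invoke the Kruskal rank hypothesis: the columns of $\mE_2^T$ are the $|\sF| \le k$ columns of $\mE^T$ indexed by $\sF$, and since $\mE^T$ has Kruskal rank $k$, any such subset of at most $k$ columns is linearly independent; thus $\mE_2^T$ has full column rank and $\mE_2^T s_2 = 0$ forces $s_2 = 0$, establishing the middle equation.

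The main obstacle, and the step I would verify most carefully, is the cancellation $\mE_1^T\vb_1 + \mE_2^T\vb_2 - \mE^T\vb = 0$ on the right-hand side. This is what makes the left null vector annihilate the entire residual equation rather than just the matrix, and it relies on $\tb$ having been constructed consistently as $\tA\tx^\ast$. Everything else is bookkeeping: matching block rows to the purified system, transposing the null space basis by symmetry, and reading off $\mE_2^T s_2$ from the sparsity pattern of $\vec{s}$.
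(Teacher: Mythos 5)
Your proof is correct, and it reaches the paper's conclusion by a somewhat different route than the paper's own argument. The paper works directly with the two purified equations \eqref{eq:pursys-1}--\eqref{eq:pursys-2}: it forms $\text{\eqref{eq:pursys-2}} - \mE_1^T\,\text{\eqref{eq:pursys-1}}$, expands $\mZ_1$, $\mZ_2$, $\mR$ by their definitions to verify term-by-term that this combination equals $\mE_2^T$ times the missing middle block equation, and then strips off $\mE_2^T$ with the explicit left inverse $(\mE_2^{}\mE_2^T)^{-1}\mE_2^{}$. You instead phrase the same cancellation invariantly: the residual $\vec{s}$ of the full permuted system is supported on the faulty block; the row $\bmat{\mE_1^T & \mE_2^T & -\mI}$ is a left null vector of the permuted matrix by Proposition~\ref{prop:null-space-basis} together with symmetry of $\tA$; and consistency of the right-hand side (your identity $\mE_1^T\vb_1 + \mE_2^T\vb_2 - \mE^T\vb = 0$, which holds because $\tb = \tA\tx^\ast$ lies in the range of the symmetric matrix $\tA$ and is therefore orthogonal to its null space) forces $\mE_2^T s_2 = 0$, after which the Kruskal rank hypothesis gives $s_2 = 0$ exactly as in the paper. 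The two arguments use the identical linear combination of block rows --- the paper's manipulation $\text{\eqref{eq:pursys-2}} - \mE_1^T\,\text{\eqref{eq:pursys-1}}$ is precisely your left null vector applied to the residual, once the zero blocks of $\vec{s}$ are taken into account --- so the computational content is the same. What your version buys is economy and conceptual clarity: it reuses Proposition~\ref{prop:null-space-basis} instead of redoing the block algebra for $\mZ_1^T - \mE_1^T\mA_{11}$ and its companions, and it makes explicit the structural fact the paper leaves implicit, namely that the cancellation on the right-hand side is exactly the consistency of $\tb$. What the paper's version buys is self-containedness at the block level and an explicit exhibition of the recovered middle equation, which is arguably more transparent to a reader checking the algebra line by line.
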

\begin{proof}
This proof is equivalent to checking whether the following equation is satisfied by
the purified solution:
\begin{equation}
\bmat{ \mA_{12}^T & \mZ_2} \bmat{ \vc \\ \vr} = \vb_2 - \mA_{22} \vf.
\end{equation}
We establish this fact algebraically from the solution of the purified system.
First note that $\mE_2^T$ is a $k$-by-$|\sF|$ matrix with full column rank,
and thus it has a left-inverse $(\mE_2^T)^{\dagger} = (\mE_2^{} \mE_2^T)^{-1} \mE_2^{}$.
Now consider the two equations in the purified system: 
\begin{align}
\label{eq:pursys-1} \mA_{11} \vc + \mZ_1 \vr & = \vb_1 - \mA_{12} \vf \\
\label{eq:pursys-2} \mZ_1^T \vc + \mR \vr & = \mE_{}^T \vb - \mZ_2^T \vf.
\end{align}
The result of $\text{\eqref{eq:pursys-2}} - \mE_1^T \text{\eqref{eq:pursys-1}}$ is: 
\[ \mE_2^T \mA_{12}^T \vc + \mE_2^T \mZ_2^{} \vr = \mE_2^T \vb_2^{} - \mE_2^T \mA_{22} \vf. \]
To complete the proof, we premultiply this equation by left inverse $(\mE_2^T)^{\dagger}$.
\end{proof}

\section{The solution process and recovery scheme}
\label{section:solution-process-and-recovery-scheme}

Proposition \ref{prop:encode-matrix-SPD} establishes $\tA$ being SPSD when $\mA$ is SPD.
Thus we can apply the conjugate gradient (CG) method to solve a singular but consistent
linear system \eqref{eq:encode-linear-system} \cite{AshbyMS:1990}. For the $n+k$
eigenvalues of $\tA$, we have $0=\tlambda_1=\cdots=\tlambda_k<\tlambda_{k+1}\leq\cdots\leq\tlambda_{n+k}$.
Let the eigenvalues of $\mA$ be $0<\lambda_1\leq\cdots\leq\lambda_n$. Because of the
interlacing property, we have $\lambda_1\leq \tlambda_{k+1}$ and $\lambda_n\leq \tlambda_{n+k}$.
The effective condition number of $\tA$ is defined as $\kappa_{e}(\tA)=\frac{\tlambda_{n+k}}{\tlambda_{k+1}}$.
Specifically, we use the following two-term recurrence form of CG~\cite{Meurant:2006}.
\begin{algorithm}
\caption{}
\begin{algorithmic}[1]
\label{alg:two-term-recurrence-cg}
\STATE Let $\vx_0$ be the initial guess and $\vr_0=\vb-\mA \vx_0$, $\beta_0=0$.
\FOR{$t=0,1,\ldots$ until convergence}
\STATE $\beta_{t}^{}=(\vr_t,\vr_t)/(\vr_{t-1},\vr_{t-1})$
\STATE $\vp_t=\vr_t+\beta_t^{} \vp_{t-1}$
\STATE $\vq_t=\mA \vp_t$
\STATE $\alpha_{t}^{} =(\vr_t,\vr_t)/(\vq_t,\vp_t)$
\STATE $\vx_{t+1}=\vx_t+\alpha_t^{} \vp_t$
\STATE $\vr_{t+1}=\vr_t-\alpha_t^{} \vq_t$
\ENDFOR
\end{algorithmic}
\end{algorithm}

We consider the setting when Algorithm \ref{alg:two-term-recurrence-cg} is executed
in a distributed environment. For the encoded system \eqref{eq:encode-linear-system},
this means the encoded matrix $\tA$ and the encoded vectors are distributed among
multiple processes by rows. Let the index set associated with process $i$ be $\sI_i$,
then $[n+k]=\bigcup_{i}\sI_i$. According to our fault model, the operations of
Algorithm \ref{alg:two-term-recurrence-cg} affected by faults in a distributed
environment are the aggregation operations --- inner products and the matrix-vector
multiplication $\mA\vp_t$. Thus our erasure-coded CG can be defined by specifying
the semantics of these two aggregation operations under faults. At the $t$-th
iteration of erasure-coded CG, let the set of failed processes be $\sP_t$.
Then the set of faulty indices is $\sF_t=\bigcup_{i\in\sP_t}\sI_i$. We assume that 
each viable process can detect the breakdown of its neighbor processes. 
Based on this assumption, we specify the semantics of the two aggregation operations as follows
\begin{itemize}
\item Inner products $(\vr_t,\vr_t)$ and $(\vq_t,\vp_t)$. The viable processes carry out the all-reduce operation by skipping the faulty components $\sF_t$ in the vectors.
\begin{align}
(\vr_t,\vr_t) &= \left((\vr_t)_{[n+k]\backslash \sF_t},(\vr_t)_{[n+k]\backslash \sF_t}\right)
\label{eq:inner-product-fault-tolerant}
\\
(\vq_t,\vp_t) &= \left((\vq_t)_{[n+k]\backslash \sF_t},(\vp_t)_{[n+k]\backslash \sF_t}\right)
\nonumber
\end{align}
Furthermore, we require no reuse of aggregation operation results. This means that
when computing $\alpha_t$, $(\vr_t,\vr_t)$ is recomputed simultaneously
with $(\vq_t,\vp_t)$. Similarly when computing $\beta_t$, $(\vr_{t-1}, \vr_{t-1})$
is recomputed simultaneously with $(\vr_t,\vr_t)$. For this purpose, we have to
maintain both $\vr_t$ and $\vr_{t-1}$.
\item Matrix-vector multiplication $\vq_t=\mA \vp_t$. A viable process carries out its local aggregation operation for computing  $\mA_{\sI_i,:} \vp_t$ by skipping the faulty components $\sF_t$ in $\vp_t$.
\begin{align*}
\mA_{\sI_i,:}\vp_t &= \mA_{\sI_i,[n+k]\backslash \sF_t}(\vp_t)_{[n+k]\backslash \sF_t}
\end{align*}
\end{itemize}
Given the above semantics on aggregation operations, the erasure-coded CG on
$\tx$ can be effectively considered as an iterative solve process on the subsystem
defined on $\tx_{[n+k]\backslash \sF_t}$, as given in \eqref{eq:subsystem-purified}.
Note that the RHS of the purified subsystem \eqref{eq:subsystem-purified} depends on
$\vf$, the snapshot value of $\tx_{\sF_t}$. For this reason, we require each viable
process to cache the freshest snapshot values it have received from its neighbor
processes. Another technical issue we need to consider when faults happen is the
update to the search direction $\vp_t$. In fault-free CG with exact arithmetic,
we have $(\vr_t,\vp_{t-1})=0$. However, given the semantics of inner product
as defined in \eqref{eq:inner-product-fault-tolerant}, the orthogonality of
$\vr_t$ and $\vp_{t-1}$ will generally not hold. We truncate the
update $\vp_t=\vr_t+\beta_t \vp_{t-1}$ to be
\begin{align*}
\vp_t &= \vr_t
\end{align*}
whenever new faults occur before the next update to $\vp_t$.

Now we consider the recovery of the solution to the raw system \eqref{eq:raw-linear-system}.
Suppose the erasure-coded CG converges on the encoded system \eqref{eq:encode-linear-system}
after $T$ iterations. Let the encoding matrix $\mE^T\in\mathbb{R}^{k\times n}$ have
Kruskal rank $k$. Let $\sF\in [n]$ be the set of all faulty indices upon convergence
such that $|\sF|\leq k$. In erasure-coded CG, the snapshot value $\vf$ of the faulty
components $\tx_{\sF}$ are cached on the viable processes. Because of the semantics of
aggregation operations, the erasure-coded CG solves the purified subsystem defined
in \eqref{eq:subsystem-purified}. Let the returned approximate solution be
$\sbmat{ \vc \\ \vr}$. According to Proposition \ref{prop:equivalence-encode-system-subsystem},
then \[ \tx = \bmat{ \vc \\ \vf \\ \vr }. \] is a solution to the encoded system
\eqref{eq:encode-linear-system}. Then, by Proposition \ref{prop:null-space-basis}, we can
recover the intended solution to the raw system \eqref{eq:raw-linear-system} through the
equation
\begin{equation} \label{eq:recovery-equation}
\bmat{ \vx^* \\ 0 } = \tx + \bmat{ \mE \\ -\mI_k } \vr. 
\end{equation}
And by Proposition \ref{prop:non-ambiguity}, the recovery equation \eqref{eq:recovery-equation} is non-ambiguous.

\section{Related work}

The paper~\cite{BridgesFHH:2012} also considers the problem of designing a fault-tolerant
linear solver. They model the faults by encapsulating all fault prone operations as an
unreliable preconditioning operator and then build the fault-tolerant linear solver within
the framework of flexible Krylov subspace methods, like flexible GMRES~\cite{Saad:1993}.
The flexible outer iteration is required to be completely reliable. In contrast,
our erasure-coded approach does not require any part of the distributed environment
to be reliable. Another salient difference between~\cite{BridgesFHH:2012} and our
approach is on the kinds of tolerable faults. In~\cite{BridgesFHH:2012} the targeted
faults are value errors resulted from numerical operations, while our erasure-coded
approach targets more general fail-stop failures (subsystem breakdowns) in a distributed
environment. It's not hard to understand that the approach in~\cite{BridgesFHH:2012}
can not recover from such faults because of the permanent data loss when subsystems
breakdown in a distributed environment.
\par
The fault-tolerant linear solver in~\cite{BridgesFHH:2012} can be considered as a
natural application of the theory of inexact and flexible Krylov subspace
methods~\cite{EshofS:2004,SimonciniS:2003a,SimonciniS:2003b}. For inexact Krylov
subspace methods, the magnitude of perturbations need to be controlled to
maintain the convergence, although the perturbations can happen anywhere in the
result of a numerical operation. In contrast, our erasure-coded approach does not
presume any bound on the perturbations; but it does require the perturbations to
be sparse in the affected components. However, the theory of inexact and flexible
Krylov subspace methods can still provide the mathematical framework and tools to
guide the design and analysis of the erasure-coded linear solvers.

\section{Experimental results} \label{sec:experiments}

In this section, we report on experimental results with varying degrees of faults
and associated overhead. The main purpose of these experiments is to demonstrate the
feasibility of the idea of an erasure-coded linear solver. Through these experiments,
we intend to identify critical research problems to be solved in order to realize
the idea of an erasure-coded linear solver in a distributed setting.

\subsection{Experiment design}

In our experiments we uses three SPD test matrices, with their sizes ($n$), number of nonzeros ($nnz$), and types shown in Table \ref{table:test-matrix}. {\tt Ltridiag500} is a $500\times 500$ $1$D model matrix with the stencil $\left[\begin{array}{lll}
-1 & 2 & -1
\end{array}\right]$. {\tt mhdb416} and {\tt nos3} are from University of Florida Sparse Matrix Collection~\cite{DavisH:2011}.
We implemented a MATLAB code to simulate the erasure-code CG described in Section~\ref{section:solution-process-and-recovery-scheme}. In our current simulation, we inject fault components only at the end of a CG iteration. Furthermore, we assume all the faults happen at the same time. More extensive and thorough simulations that allow faults happen anywhere in a CG iteration and temporally interspersed are relegated to future work. For a given $k$ number of tolerable faults, we design the $n\times k$ encoding matrix $\mE$ as a random Gaussian matrix scaled by $\frac{1}{\sqrt{n}}$, i.e.,
\begin{align*}
\mE&=\frac{1}{\sqrt{n}}\bar{\mE}, \ \ \ \ \bar{\mE}_{ij} \sim \mathcal{N}(0,1), \ \ \ \ i=1,\ldots,n,\ j=1,\ldots,k
\end{align*}
This matrix has Kruskal rank $k$ with high probability.

\subsection{Experimental Results}

For each test matrix in Table \ref{table:test-matrix}, we run an erasure-coded CG simulation code for $k=0,1,20\%n$ number of faults. The RHS $\vb$ of the raw system \eqref{eq:raw-linear-system} are synthesized using random solution vectors $\vx$ in $(0,1)^n$. We set the number of maximum CG iterations to be $10n$, and monitor the convergence using the stopping criterion $\normof[2]{ \vr_t } \leq 10^{-10}.$ For each value of $k$, the $k$ fault components are randomly selected from $[n]$. These fault components are injected simultaneously at the end of one randomly selected CG iteration that is no larger than $0.25n$. We refer to this CG iteration as the fault point. In Figures \ref{figure:residual-norm-Ltridiag500} - \ref{figure:residual-norm-nos3}, we plot the residual norm vs.~CG iteration for the three test matrices; and from left to right for $k=0,1,20\%n$. The fault point is marked by the red cross.

\begin{table}[tb]
\caption{Test matrices}
\label{table:test-matrix}
\begin{center}
\begin{tabularx}{0.7\linewidth}{rXXl}
\toprule
  Matrix & $n$ & $nnz$ & Type \\
\midrule
  {\tt Ltridiag500} & $500$ & $1,498$ & $1$D model problem \\
  {\tt mhdb416} & $416$ & $2,312$ & electromagnetics problem\\
  {\tt nos3} & $960$ & $15,844$ & structural problem \\
\bottomrule
 \end{tabularx}
\end{center}

\end{table}

\begin{figure}[t]
  \centering 
  \includegraphics[width=0.32\hsize]{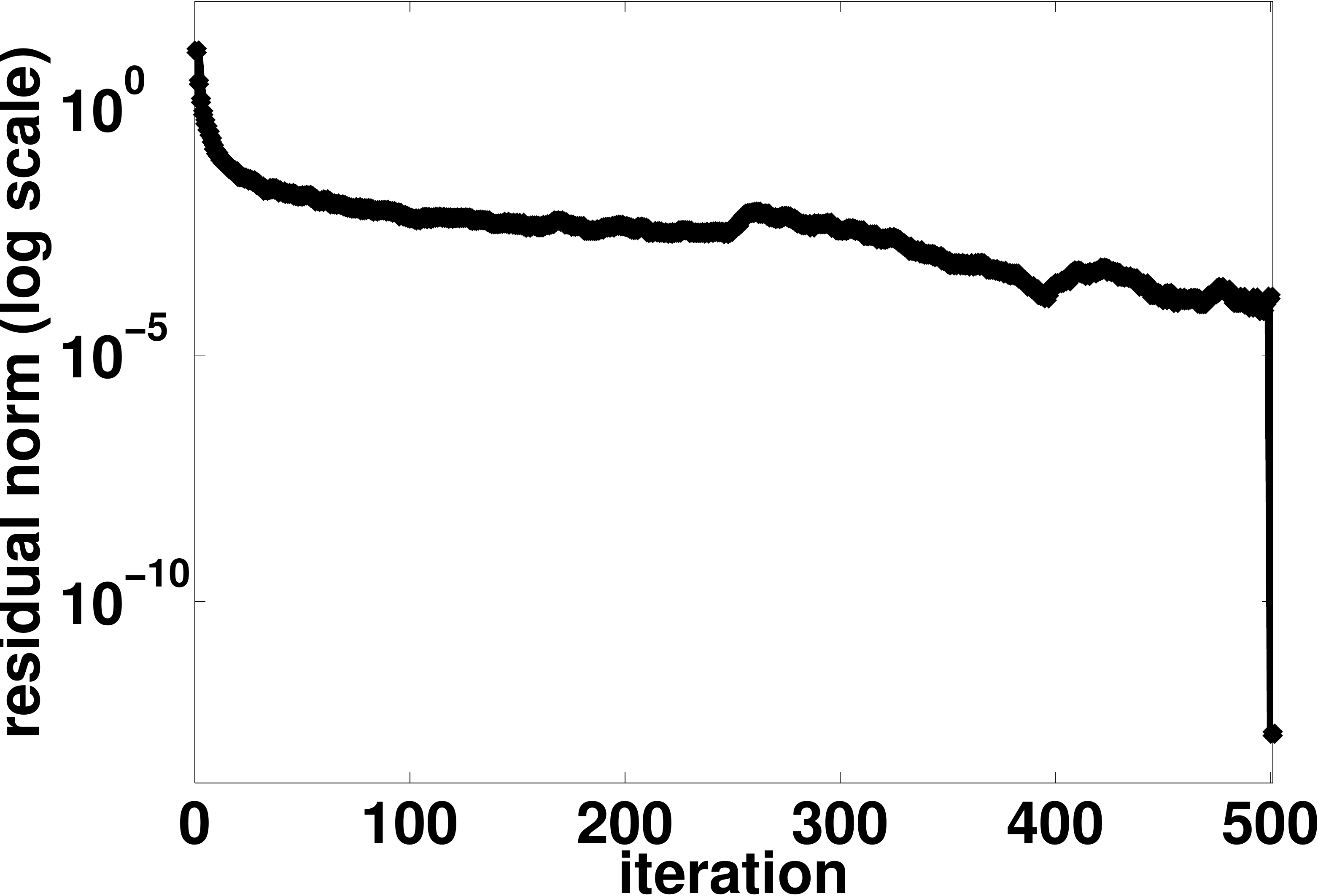}
  \includegraphics[width=0.32\hsize]{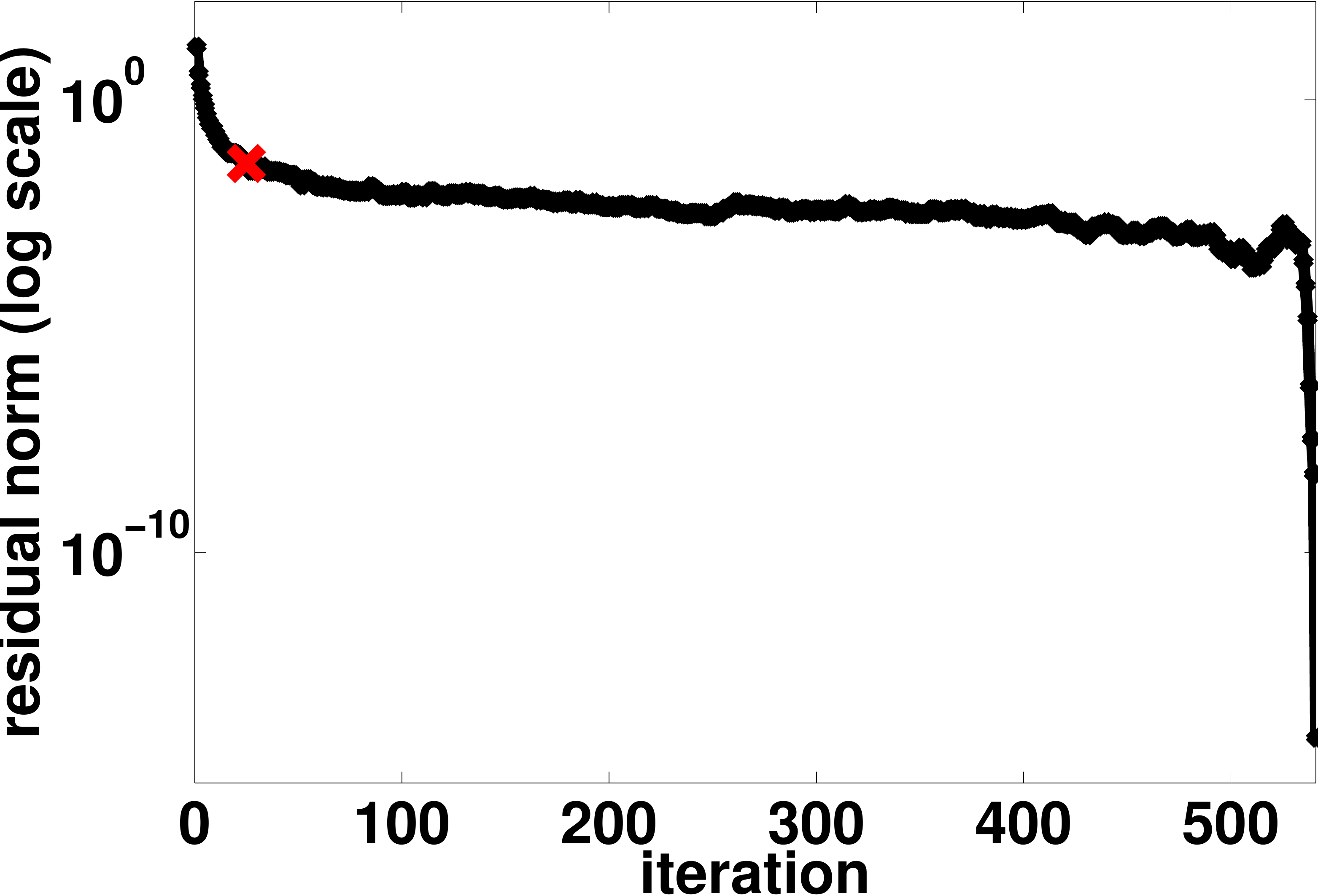}
  \includegraphics[width=0.32\hsize]{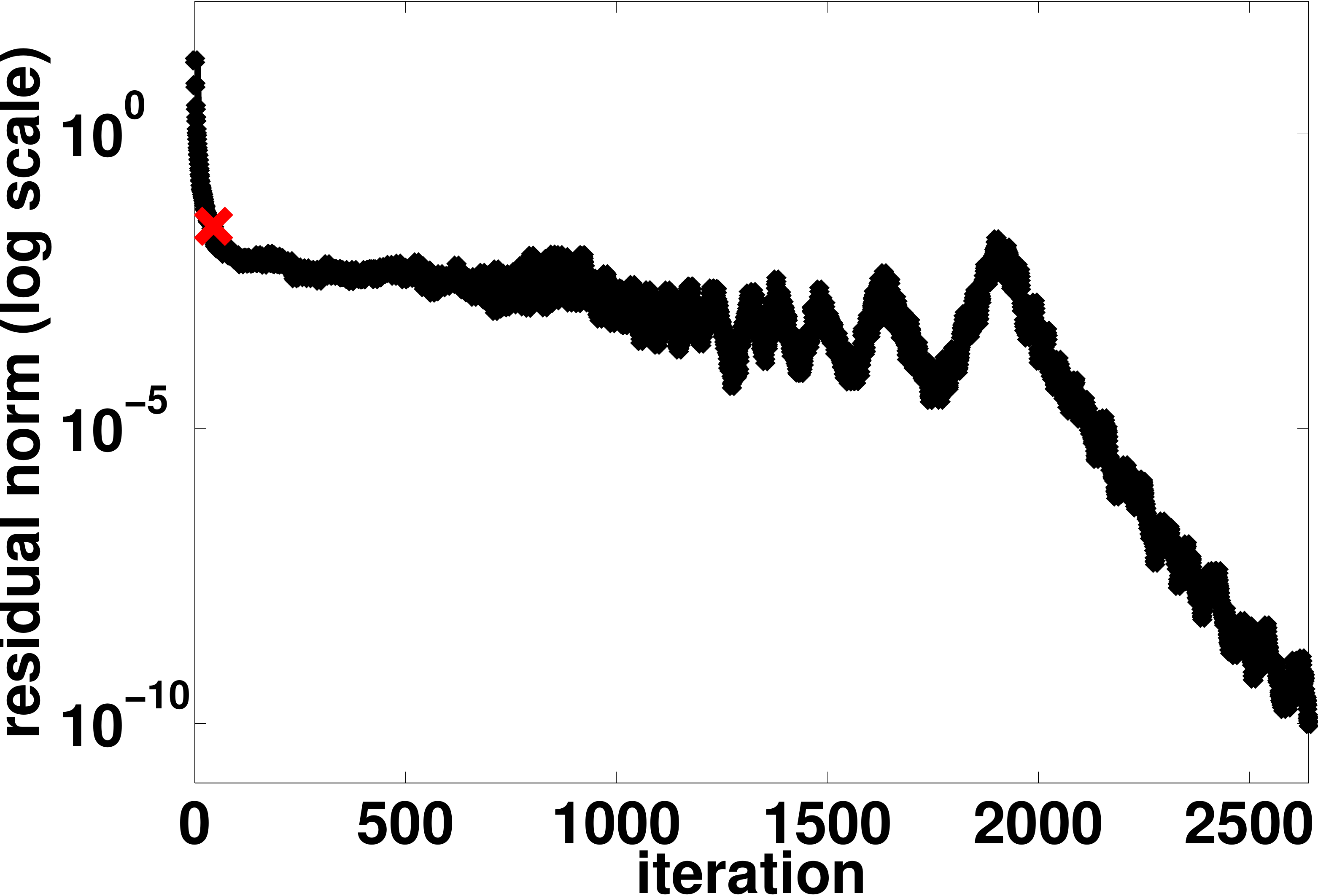}
  \caption{{{\tt Ltridiag500} residual norm. From left to right $k=0,1,20\%n$. The fault point is marked by the red cross.}}
  \label{figure:residual-norm-Ltridiag500}
  \vspace{\baselineskip}
  
  \includegraphics[width=0.32\hsize]{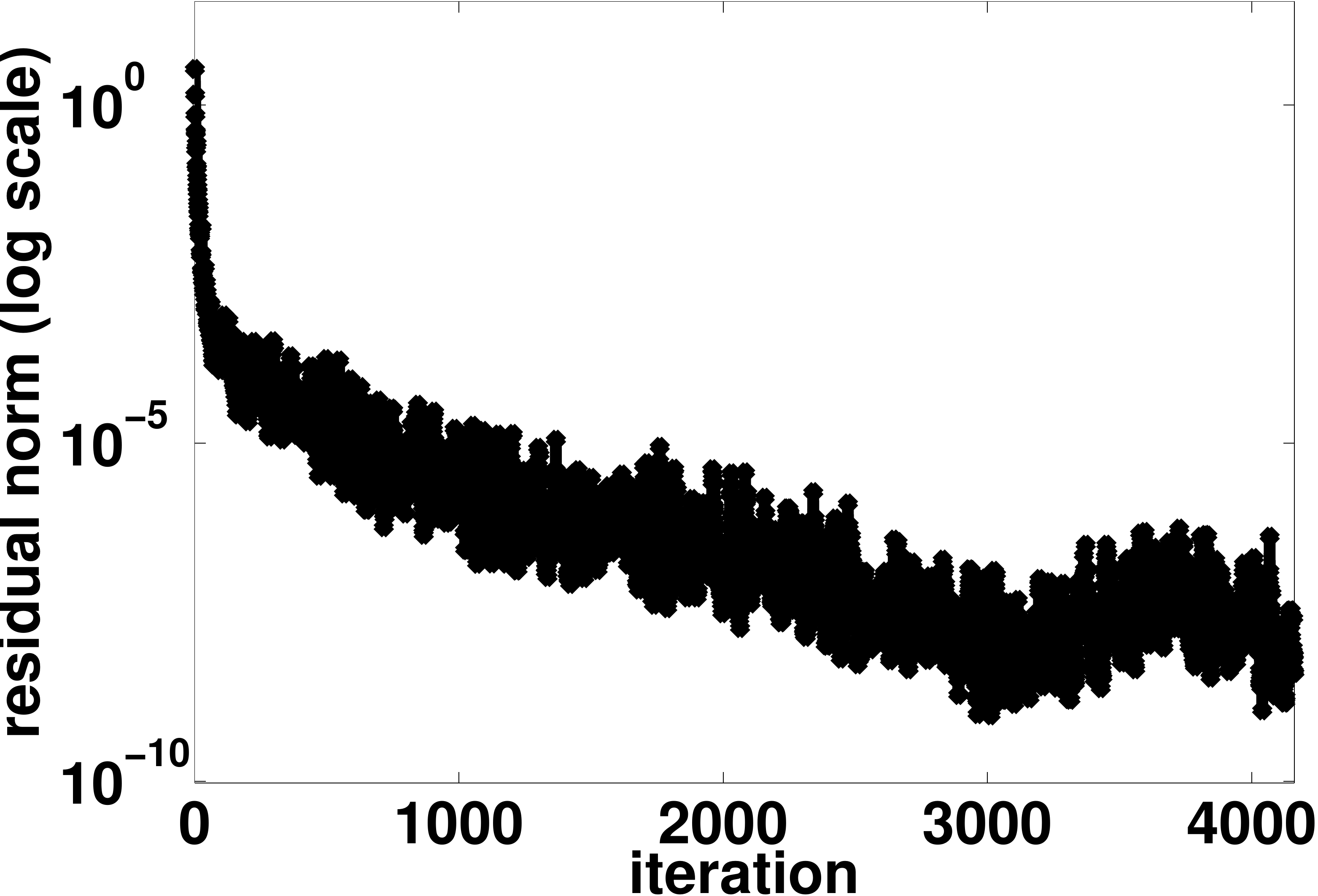}
  \includegraphics[width=0.32\hsize]{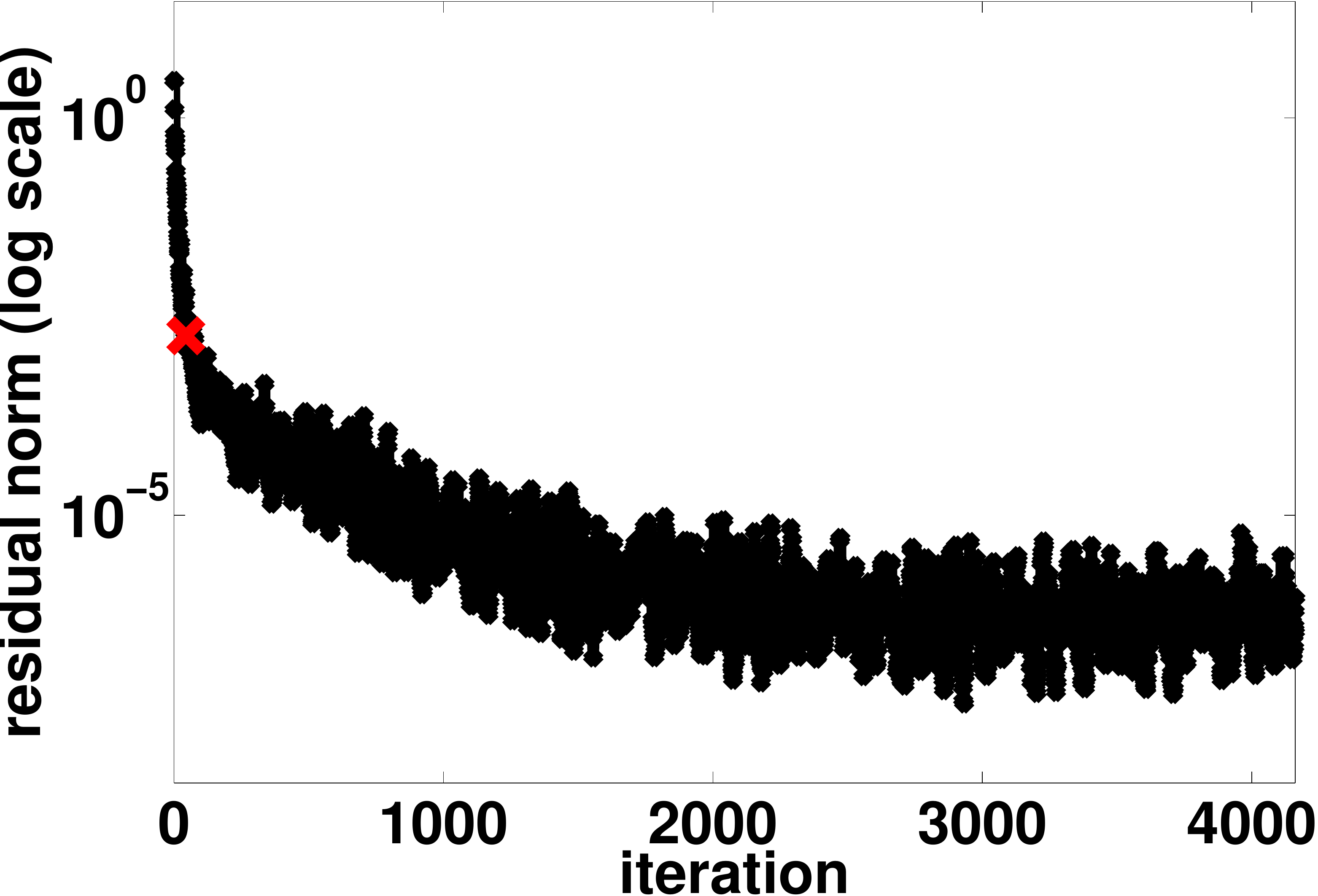}
  \includegraphics[width=0.32\hsize]{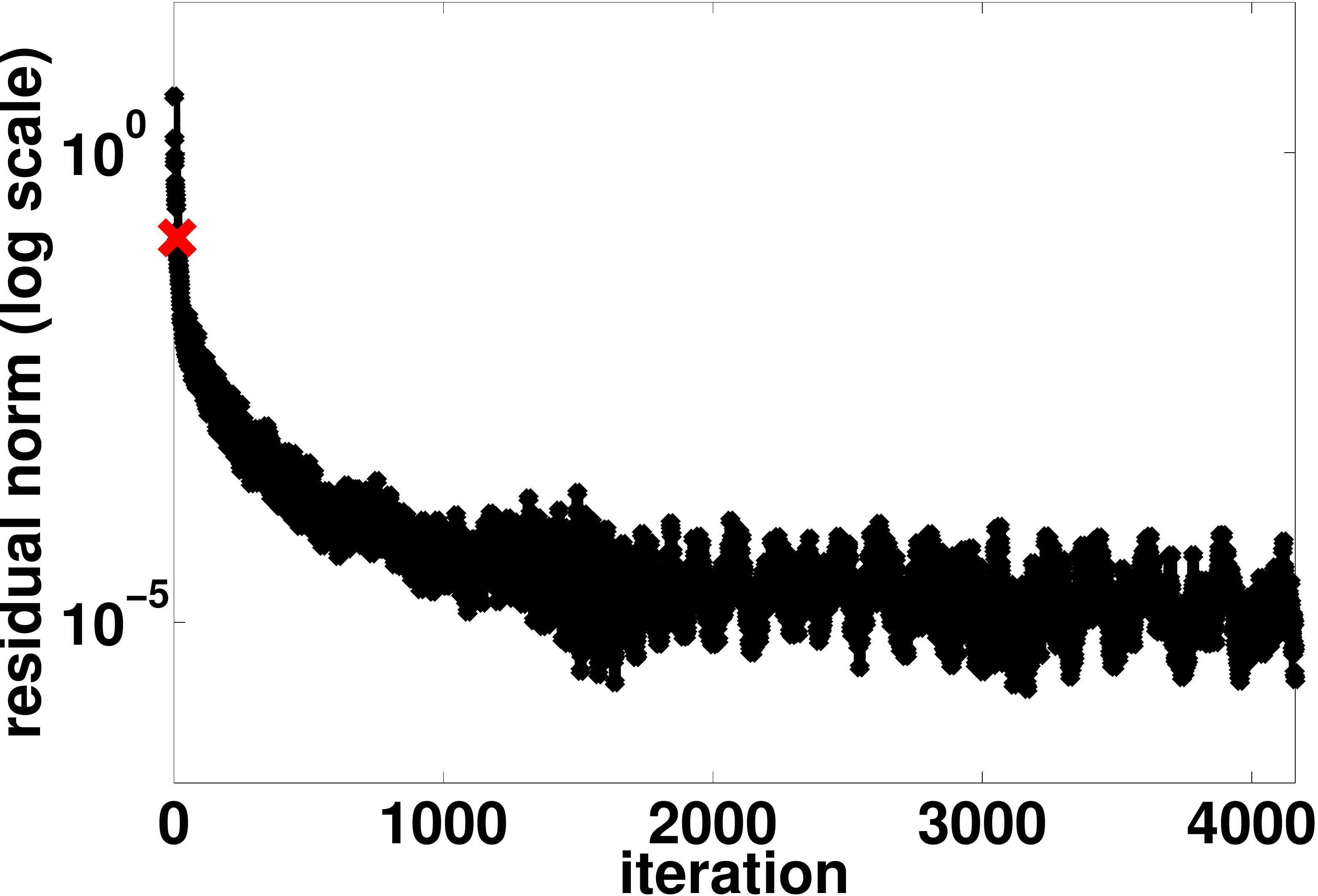}
  \caption{{{\tt mhdb416} residual norm. From left to right $k=0,1,20\%n$. The fault point is marked by the red cross.}}
  \label{figure:residual-norm-mhdb416}
\vspace{\baselineskip}

  \includegraphics[width=0.32\hsize]{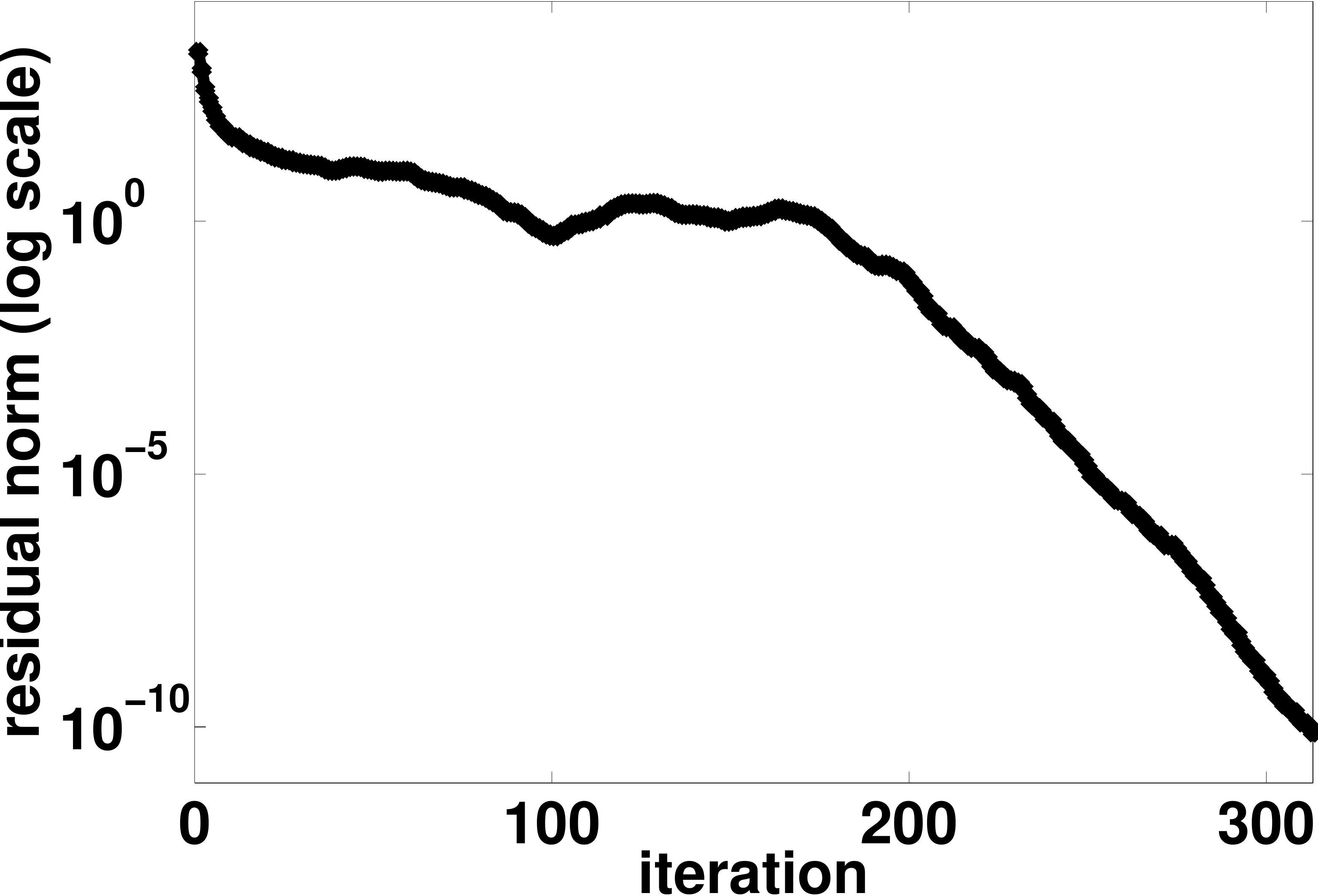}
  \includegraphics[width=0.32\hsize]{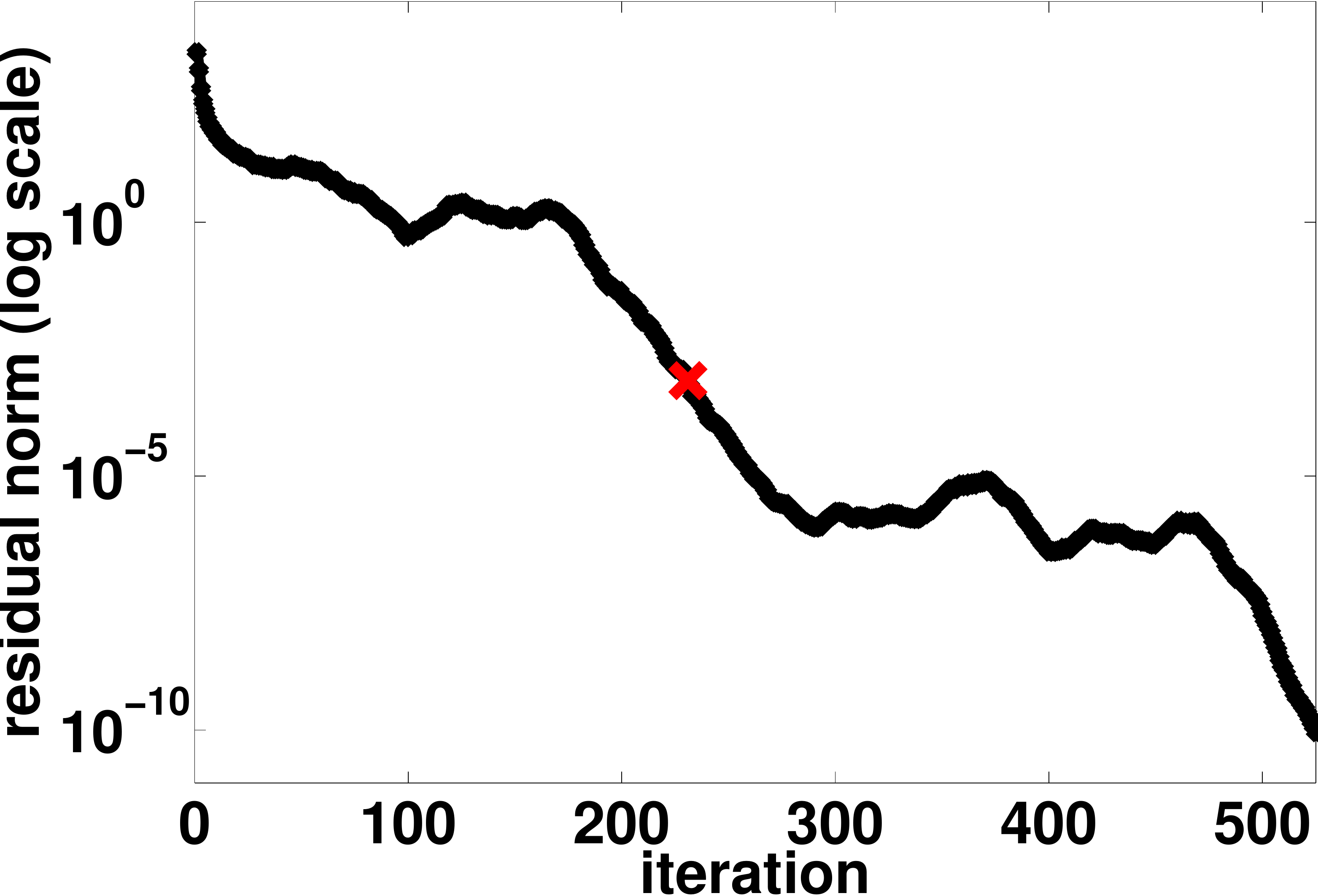}
  \includegraphics[width=0.32\hsize]{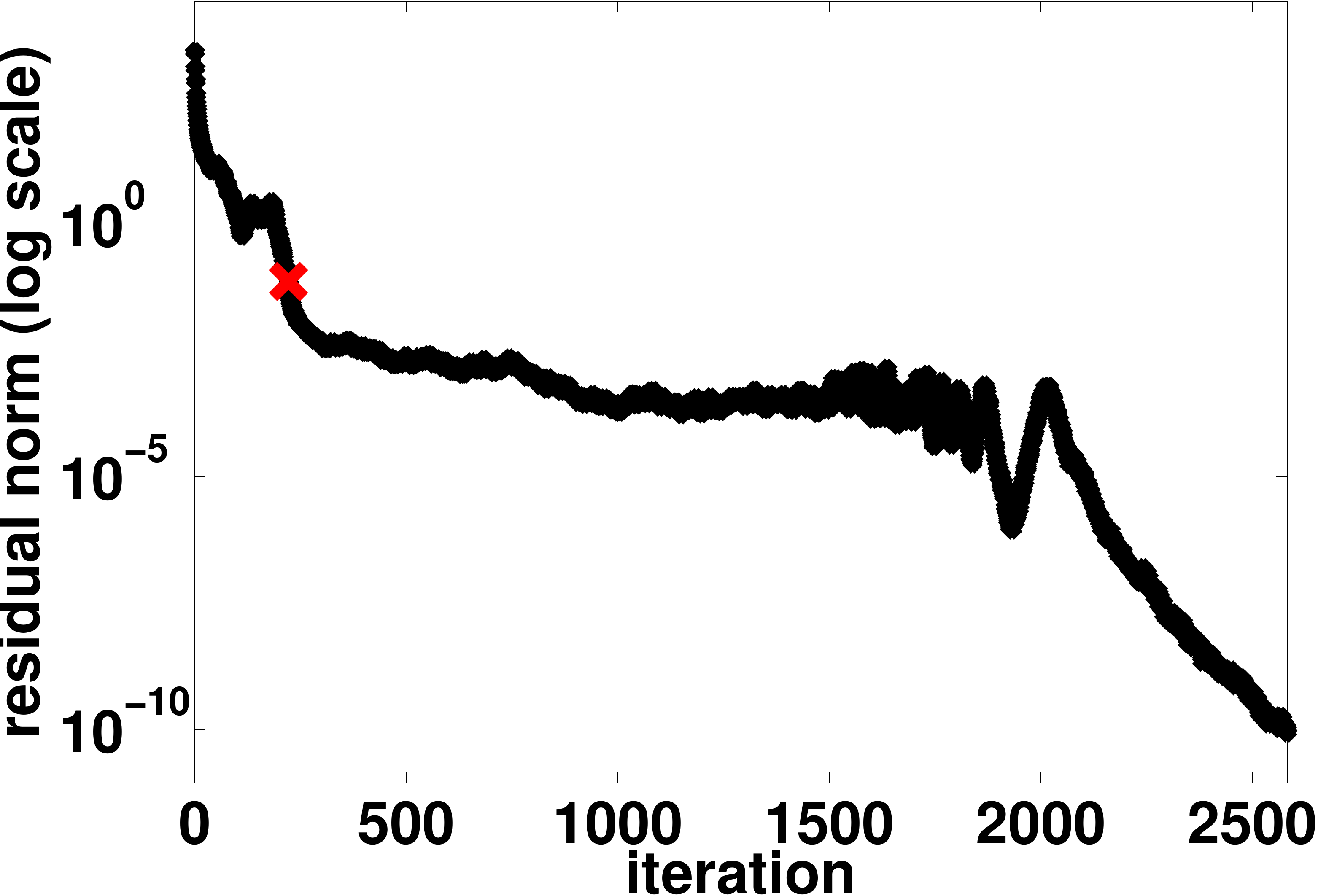}
  \caption{{{\tt nos3} residual norm. From left to right $k=0,1,20\%n$. The fault point is marked by the red cross.}}
  \label{figure:residual-norm-nos3}
\end{figure}

\begin{table}[t]
\caption{{Results on {\tt Ltridiag500}}}
\begin{center}
\begin{tabular}{lll}
  \toprule
  $k$ & Iteration number & Relative residual on raw system\\
  \midrule
  $0$ & $500$ & $1.39\times 10^{-14}$ \\
  $1$ & $540$ & $3.76\times 10^{-15}$ \\
  $20\%n$ & $2,640$ & $3.72\times 10^{-11}$\\
  \bottomrule
 \end{tabular}
\end{center}
\label{table:Ltridiag500}
\end{table}

\begin{table}[t]
\caption{{Results on {\tt mhdb416}}}
\begin{center}
\begin{tabular}{lll}
  \toprule
  $k$ & Iteration number & Relative residual on raw system\\
  \midrule
  $0$ & $4,160$ & $1.19\times 10^{-9}$ \\
  $1$ & $4,160$ & $1.47\times 10^{-5}$ \\
  $20\%n$ & $4,160$ & $2.09\times 10^{-6}$\\
  \bottomrule
 \end{tabular}
\end{center}
\label{table:mhdb416}
\end{table}

\begin{table}[t]
\caption{{Results on {\tt nos3}}}
\begin{center}
\begin{tabular}{lll}
  \toprule
  $k$ & Iteration number & Relative residual on raw system \\
  \midrule
  $0$ & $312$ & $3.51\times 10^{-14}$ \\
  $1$ & $524$ & $4.09\times 10^{-14}$ \\
  $20\%n$ & $2,581$ & $1.91\times 10^{-13}$\\
  \bottomrule
 \end{tabular}
\end{center}
\label{table:nos3}
\end{table}

To evaluate the quality of the recovered solution $\vx^{\ast}$, we compute the relative residual achieved by $\vx^{\ast}$ on the raw system \eqref{eq:raw-linear-system}: 
\[ \frac{\normof[2]{\vb- \mA \vx^{\ast}}}{\normof[2]{\vb}}.\] 
We summarize the number of iterations and relative residuals on the raw system for three test matrices in Tables \ref{table:Ltridiag500}--\ref{table:nos3}. We observe that on {\tt Ltridiag500} and {\tt nos3}, when there is only 1 fault, the erasure-coded CG can recover a solution with almost the same quality as if there is no fault. And the number of iterations are also comparable. When there are $20\%$ fault components, the erasure-coded CG can still recover an approximate solution with good quality, although the number of iterations increase substantially. In contrast to {\tt Ltridiag500} and {\tt nos3}, on {\tt mhdb416}, the erasure-coded CG reaches the maximum number of iterations (i.e., $10n$) for all the three values of $k$. Comparing the solution quality of $k=0$ to those of $k=1,20\%n$ in Table \ref{table:mhdb416} indicates more iterations are required for the latter two cases.

\section{Future work}

The current manuscript describes and documents our preliminary investigation
along the idea of erasure-coded linear solvers. To further develop the theory and
techniques for practically realizing this idea, we plan to explore the following directions.
\begin{itemize}
\item More thorough simulation and prototype tests. Our current experimental simulation
assumes the faults can only occur at the end of a CG iteration, as well as being
simultaneously. We plan to design and test under a more realistic simulation,
where the faults could happen anywhere during a CG iteration, as well as being
temporally interspersed.
\item Structure and sparse random projection design of the encoding matrix.
Our current design of the encoding matrix $\mE$ uses the random Gaussian matrix,
which is very dense. To enhance both the encoding and recovery efficiency, we plan
to adopt structure and sparse random projections~\cite{ClarksonDMMMW:2013,FoucartR:2013,MengM:2013},
which allow fast matrix-vector multiplications.
\item Improve the convergence of erasure-coded CG. Our current erasure-coded CG
adopts the truncation strategy when new faults occur. Our preliminary experiments
indicate that such a strategy may result in slow and wavy convergence when there
is a large number of faults. We plan to adapt the flexible CG technique~\cite{Notay:2000}
to our erasure-coded CG solver in order to improve its convergence speed.
\end{itemize}

\bibliographystyle{plain}
\bibliography{ErasureCodedSolver}

\end{document}